\newtheorem{definition}{Definition}[section]
\newtheorem{remark}{Remark}[section]
\newtheorem{lemma}{Lemma}[section]
\newtheorem{theorem}{Theorem}[section]
\title{\Large \bf Further clarifications of ``Necessary and sufficient stability condition of fractional-order interval linear systems''}
\author{\small Hyo-Sung Ahn$^\dag$, Young-Hun Lim$^\dag$, Kwang-Kyo Oh$^\dag$, and YangQuan Chen$^\ddag$
\thanks{\small $^\dag$School of Mechatronics, Gwangju Institute of Science and Technology, 123 Cheomdangwagi-ro, Buk-gu, Gwangju, 500-712 South Korea.
{E-mail: hyosung@gist.ac.kr}} \\
\thanks{\small $^\ddag$University of California, Merced, 5200 North Lake Road, Merced, CA 95343, USA.}}
\begin{document}

\maketitle

\begin{abstract}
This technical report replies to the comments of \cite{Comment_2014} in detail, and corrects a possible mis-interpretation of \cite{Ahn_Auto2008} in terms of the conventional robust stability concept. After defining the robust stability and quadratic stability concepts, it will be more clearly described that the condition established in \cite{Ahn_Auto2008} can be considered an exact condition for the quadratic stability of fractional-order interval systems. 
\end{abstract}

\section{Introduction} 
First of all, we would like to appreciate the authors of \cite{Comment_2014} for giving us a chance to review the results of \cite{Ahn_Auto2008}. This technical report corrects a potential mis-interpretation of the works in \cite{Ahn_Auto2008} in terms of the conventional robust stability. 
After having a careful thinking on the results of \cite{Ahn_Auto2008} motivated by the comments \cite{Comment_2014}, we have found that the inaccurate statements and terminologies of \cite{Ahn_Auto2008} may induce some confusions to readers. Furthermore, we have found that there could be a mis-interpretation of the \textit{Lemma~$2$} of \cite{Ahn_Auto2008} in terms of the conventional robust stability concept. Even though the works of \cite{Ahn_Auto2008} should have been understood in terms of quadratic stability, since it is highly possible to mislead the readers, we would like to provide detailed clarifications in this technical report.

Firstly, to clarify the results of \cite{Ahn_Auto2008} in a clear way, we would like to provide the concepts of robust stability and quadratic stability in fractional-oder interval systems. Then, based on the definitions, we will clearly show that the results of \cite{Ahn_Auto2008} can be considered as an exact condition in terms of quadratic stability, but it is a sufficient condition in terms of the conventional robust stability. 



\section{Notations and Definitions} \label{sec2}
A matrix $A$ is denoted as $A \doteq [a_{ij}]$ and an interval parameter is denoted as $a_{ij} \in a_{ij}^I = \lfloor \underline{a_{ij}}, \overline{a_{ij}} \rfloor$, where $\underline{a_{ij}}$ and $\overline{a_{ij}}$ are lower and upper boundaries of the interval parameter. Consider the following linear time-invariant (LTI) fractional-order interval systems 
\begin{eqnarray} \label{eq1}
\frac{{\text d}^\alpha x(t) }{{\text d}t^\alpha} = A x, ~ A \in {\mathcal A}^{I}
\end{eqnarray}
where $1 \leq \alpha <2$, $A \doteq [a_{ij}]$, $ a_{ij} \in \lfloor \underline{a_{ij}}, \overline{a_{ij}} \rfloor$ and the interval matrix set is defined as ${\mathcal A}^I \doteq [ \lfloor \underline{a_{ij}}, \overline{a_{ij}} \rfloor ]$. Define the set of vertex matrices as ${\mathcal A}^v \doteq [ \{  \underline{a_{ij}}, \overline{a_{ij}} \}]$. It is noticeable that the interval matrix set ${\mathcal A}^I$ and vertex matrix  set ${\mathcal A}^v$ are sets with the relationship of ${\mathcal A}^v \subset {\mathcal A}^I$. 
Within the set ${\mathcal A}^I$, there are infinitely many element matrices, while in the set ${\mathcal A}^v$, there are finite number of element matrices. Let individual element matrices $A$ within the set ${\mathcal A}^I$ denote as 
$A_i^{int}$, i.e., $A_i^{int} \in {\mathcal A}^I$, where we use superscript `$int$' to denote the individual element matrices within interval, and the subscript `$i$' to denote $i$-th matrix. So, we have $i \in {\mathcal I}^{index} \doteq \{1,2,3, \cdots, \infty \}$. Let individual element matrices $A$ within the set ${\mathcal A}^v$ denote as 
$A_i^{ver}$, i.e., $A_i^{ver} \in {\mathcal A}^v$, where we use superscript `$ver$' to denote the individual vertex matrices, and the subscript `$i$' to denote $i$-th vertex matrix; in this case, $i \in {\mathcal V}^{index} \doteq \{1,2,3, \cdots, N\}$, where $N$ is the cardinality of the vertex matrix set.  For a Hermitian matrix $A$, the conjugate transpose is expressed as $A^\ast$, and if it is positive definite, then it is written as $A=A^\ast >0$ (we use $A=A^\ast <0$ for negative definite). Now, we provide the following two concepts for stability of fractional-order interval systems. 

\begin{definition} \label{def_rs}
(Robust stability) The fractional-order interval systems (\ref{eq1}) would be robust stable if there exist $P_i^{int}=(P_i^{int})^\ast>0$ in accordance with $A_i^{int}$ such that 
\begin{eqnarray}
\beta P_i^{int} A_i^{int} + \beta^\ast (A_i^{int})^T P_i^{int} <0, \forall A_i^{int} \in {\mathcal A}^I \label{eq_rs}
\end{eqnarray}
\end{definition}

\begin{definition} \label{def_qs}
(Quadratic stability) Based on \cite{Hideki_tac1993,Amato2006},  the fractional-order interval systems (\ref{eq1}) shall be said to be quadratically stable if there exists a $P=P^\ast>0$ for all $A_i^{int}$ such that 
\begin{eqnarray}
\beta P A_i^{int} + \beta^\ast (A_i^{int})^T P <0, \forall A_i^{int} \in {\mathcal A}^I  \label{eq_qs}
\end{eqnarray} 
\end{definition}

Note that $P_i^{int}$ in (\ref{eq_rs}) could be different according to the selection of $A_i^{int} \in {\mathcal A}^I$, while in (\ref{eq_qs}), $P$ is fixed as a common matrix regardless of $A_i^{int}$. Also note that in \cite{Hideki_tac1993,Amato2006}, the linear time-variant (LTV) cases are considered for quadratic stability; but for a clarity of description and for the consistence of presentation, we consider only the LTI fractional-order interval cases. But, the results of \cite{Ahn_Auto2008} would result in an exact condition for the quadratic stability in LTV cases also. A similar result for integer-order LTV cases was established in \cite{BYKim2011}. 

\begin{remark}
In the sense of the \textit{Definition~\ref{def_qs}}, the \textit{Lemma~1} of \cite{Ahn_Auto2008} could be (or should have been) understood as a kind of definition for quadratic stability of fractional-order interval systems. See also the \textit{Lemma~\ref{lemma1-qs}} in the next section.
\end{remark}

\section{Reply to \cite{Comment_2014} and Corrections of \cite{Ahn_Auto2008}} \label{sec3}
In \cite{Comment_2014}, it is pointed out that the \textit{Lemma~$1$} and \textit{Theorem~$3$} of \cite{Ahn_Auto2008} are not true; but they only provide sufficient conditions. 
As aforementioned, the inaccurate use of words (i.e., robust stability) in \textit{Lemma~$1$} of \cite{Ahn_Auto2008} might have been misleading the readers. 
In what follows, we would like to reply to the comments of \cite{Comment_2014} by way of clarifying the results of \cite{Ahn_Auto2008} in a much more clear way.

First of all, to avoid a confusion arising from the statement of the \textit{Lemma~$1$} of \cite{Ahn_Auto2008}, based on the \textit{Definition~\ref{def_rs}} and \textit{Definition~\ref{def_qs}}, it might be necessary to 
rewrite it into the following lemmas:
\begin{lemma} \label{lemma1-rs}
The system (\ref{eq1}) is robust stable if and only if, for individual matrices $A_i^{int} \in {\mathcal A}^I$, there exist matrices $P_i^{int} = (P_i^{int})^\ast >0$ such that 
\begin{eqnarray} \label{eq2}
\max\{ \overline{\lambda_i} (\beta P_i^{int} A_i^{int} + \beta^\ast (A_i^{int})^T P_i^{int})  \} <0 
\end{eqnarray}
where $\overline{\lambda_i}$ is the maximum eigenvalue of $\beta P_i^{int} A_i^{int} + \beta^\ast(A_i^{int})^T P_i^{int}$.
\end{lemma}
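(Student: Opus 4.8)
The plan is to recognize that Lemma~\ref{lemma1-rs} is merely a spectral restatement of the robust-stability condition in Definition~\ref{def_rs}: the defining inequality $\beta P_i^{int} A_i^{int} + \beta^\ast (A_i^{int})^T P_i^{int} < 0$ expresses negative definiteness of a matrix, and I would replace it by the equivalent statement that the largest eigenvalue of that same matrix is strictly negative. Writing $M_i \doteq \beta P_i^{int} A_i^{int} + \beta^\ast (A_i^{int})^T P_i^{int}$, the whole argument reduces to the standard fact that a Hermitian matrix is negative definite if and only if its maximum eigenvalue is strictly negative. The only preliminary work is to confirm that $M_i$ is indeed Hermitian, so that the phrase ``maximum eigenvalue'' $\overline{\lambda_i}$ is well defined and real.

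First I would verify Hermiticity of $M_i$. Taking conjugate transposes and using that each $A_i^{int} \in {\mathcal A}^I$ has real entries (so $(A_i^{int})^\ast = (A_i^{int})^T$), that $P_i^{int} = (P_i^{int})^\ast$, and that the scalars satisfy $\beta^\ast = \overline{\beta}$, one obtains
\begin{eqnarray}
M_i^\ast = \beta^\ast (A_i^{int})^T P_i^{int} + \beta P_i^{int} A_i^{int} = M_i. \nonumber
\end{eqnarray}
Hence $M_i = M_i^\ast$, and by the spectral theorem its eigenvalues are real; in particular $\overline{\lambda_i}(M_i)$ is a well-defined real number.

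With Hermiticity established, I would close both directions of the equivalence. For a Hermitian matrix the Rayleigh-quotient characterization gives $x^\ast M_i x \le \overline{\lambda_i}(M_i)\, \|x\|^2$ for all $x$, with equality attained on the associated eigenvector; consequently $M_i < 0$ (that is, $x^\ast M_i x < 0$ for every $x \neq 0$) holds if and only if $\overline{\lambda_i}(M_i) < 0$. Applying this equivalence to each $A_i^{int} \in {\mathcal A}^I$ shows that the existence of matrices $P_i^{int} = (P_i^{int})^\ast > 0$ satisfying (\ref{eq_rs}) is identical to the existence of such $P_i^{int}$ satisfying (\ref{eq2}), which is exactly the claimed ``if and only if''.

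The step I expect to require the most care is the Hermiticity verification: the conclusion rests essentially on the entries of $A_i^{int}$ being real, on the paired appearance of $\beta$ and $\beta^\ast$, and on $P_i^{int}$ being Hermitian rather than merely real symmetric. Once these hypotheses are invoked correctly, the remainder is the routine spectral equivalence, so no genuine obstacle remains beyond careful bookkeeping of the conjugate-transpose operations.
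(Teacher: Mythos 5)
Your proposal is correct and follows the same route as the paper, which simply declares the lemma ``direct from the robust stability concept defined in Definition~\ref{def_rs}''; you make that directness explicit by checking that $M_i$ is Hermitian and invoking the standard equivalence between negative definiteness and a negative maximum eigenvalue. Since the paper takes Definition~\ref{def_rs} as the meaning of robust stability, your equivalence of (\ref{eq_rs}) with the eigenvalue condition is exactly what is needed, and no gap remains.
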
  
\begin{proof}
The proof is direct from the robust stability concept defined in the \textit{Definition~\ref{def_rs}} and from the arguments given in the first paragraph of Section~$2$ of \cite{Ahn_Auto2008}. 
\end{proof}

\begin{lemma} \label{lemma1-qs}
The system (\ref{eq1}) is quadratically stable if and only if, for all interval element matrices $A_i^{int} \in {\mathcal A}^I$, there exists a $P = P^\ast >0$ such that 
\begin{eqnarray} \label{eq2}
\max\{ \overline{\lambda_i} (\beta P A_i^{int} + \beta^\ast (A_i^{int})^T P)  \} <0 
\end{eqnarray}
where $\overline{\lambda_i}$ is the maximum eigenvalue of $\beta P A_i^{int} + \beta^\ast(A_i^{int})^T P$.
\end{lemma}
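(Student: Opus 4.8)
The plan is to recognize that Lemma~\ref{lemma1-qs} is nothing more than the eigenvalue-level restatement of the quadratic stability condition already fixed in Definition~\ref{def_qs}. Consequently the whole argument collapses onto the standard equivalence between negative definiteness of a Hermitian matrix and negativity of its largest eigenvalue, and the proof should parallel that of Lemma~\ref{lemma1-rs}, with the per-matrix certificates $P_i^{int}$ replaced throughout by a single common $P$.

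First I would verify that, for any $P = P^\ast > 0$ and any (real) $A_i^{int} \in \mathcal{A}^I$, the matrix $M_i \doteq \beta P A_i^{int} + \beta^\ast (A_i^{int})^T P$ is Hermitian. Taking the conjugate transpose and using $P^\ast = P$ together with the reality of $A_i^{int}$ (so that $(A_i^{int})^\ast = (A_i^{int})^T$), the two summands simply interchange and one obtains $M_i^\ast = M_i$. This step is routine but essential, since the eigenvalue characterization of definiteness is valid only for Hermitian matrices. Next I would invoke the standard fact that a Hermitian matrix $M_i = M_i^\ast$ satisfies $M_i < 0$ if and only if all its eigenvalues are strictly negative, i.e.\ if and only if $\overline{\lambda_i}(M_i) < 0$, where $\overline{\lambda_i}$ denotes the largest eigenvalue. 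Applying this equivalence to each $A_i^{int}$, the defining inequality (\ref{eq_qs}) of Definition~\ref{def_qs}, namely $M_i < 0$ for all $A_i^{int} \in \mathcal{A}^I$ under one common $P$, is seen to be equivalent to $\max\{\overline{\lambda_i}(M_i)\} < 0$ for all $A_i^{int}$ under that same $P$. Since the existence of such a common $P$ is precisely the definition of quadratic stability, both directions of the biconditional follow at once.

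I do not anticipate a genuine obstacle, as the content is a direct translation of Definition~\ref{def_qs} into spectral language. The only point requiring care is that the matrix $P$ must be the \emph{same} for every $A_i^{int}$ on both sides of the equivalence; I would therefore state explicitly that the quantifier ordering ``there exists $P = P^\ast > 0$ such that for all $A_i^{int}$'' is preserved throughout, so as to keep this lemma clearly distinct from the robust-stability version of Lemma~\ref{lemma1-rs}, where the certificate $P_i^{int}$ is permitted to vary with $A_i^{int}$.
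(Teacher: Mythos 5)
Your proposal is correct and follows essentially the same route as the paper, which simply declares the lemma ``direct from the quadratic stability concept defined in Definition~\ref{def_qs}''; you merely make explicit the underlying equivalence between negative definiteness of the Hermitian matrix $\beta P A_i^{int} + \beta^\ast (A_i^{int})^T P$ and negativity of its largest eigenvalue, together with the preserved quantifier ordering on the common $P$.
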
  
\begin{proof}
The proof is direct from the quadratic stability concept defined in the \textit{Definition~\ref{def_qs}}. 
\end{proof}
\begin{remark}
Actually, the \textit{Lemma~1} in \cite{Ahn_Auto2008} corresponds to the above \textit{Lemma~\ref{lemma1-qs}} (not \textit{Lemma~\ref{lemma1-rs}}). Also note that the result for the integer-order cases of the \textit{Lemma~\ref{lemma1-rs}} is given in \cite{Kolev_tac2005}. 
\end{remark}
Obviously, it is impossible to check the condition given in the above lemmas since they are dependent on all interval element matrices. 
To handle this problem, the authors of \cite{Ahn_Auto2008} used an algebraic relationship between the maximum eigenvalue and quadratic maximization. That is, to check the stability by using only a finite number of element matrices, which is the main goal of \cite{Ahn_Auto2008}, 
they defined $\overline{\lambda}$ for any Hermitian matrix $P$ with appropriate dimension (see eq. (3) of \cite{Ahn_Auto2008}):
\begin{eqnarray} \label{eq3}
\overline{\lambda}(P) & \doteq &    \max_{A_i^{int} \in {\mathcal A}^I} \left\{ \overline{\lambda_i} (\beta P A_i^{int} + \beta^\ast (A_i^{int})^T P)  \right \} \nonumber\\
          & = &\max_{A_i^{int} \in {\mathcal A}^I} \left \{ \max_{ \Vert z \Vert =1} z^\ast (\beta P A_i^{int} + \beta^\ast (A_i^{int})^T P) z \right \}~~~
\end{eqnarray}
 
After some algebraic manipulations, based on (\ref{eq3}), the authors of \cite{Ahn_Auto2008} obtained the following lemma (see \textit{Lemma~$2$} of \cite{Ahn_Auto2008}):
\begin{lemma} \label{lemma2} 
For any Hermitian matrix $P$ with appropriate dimension, the value of \eqref{eq3} is maximized at one of the vertex matrices. 
\end{lemma}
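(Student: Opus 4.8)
The plan is to exploit the fact that, for a fixed Hermitian matrix $P$, the objective appearing in the second line of \eqref{eq3} is a \emph{linear} function of the entries of $A$, and that $\mathcal{A}^I$ is a box (hyperrectangle) in entry space whose corners are precisely the vertex matrices collected in $\mathcal{A}^v$. First I would swap the two maximizations in \eqref{eq3}, writing
\begin{eqnarray}
\overline{\lambda}(P) = \max_{\Vert z \Vert =1} \left\{ \max_{A_i^{int} \in \mathcal{A}^I} z^\ast (\beta P A_i^{int} + \beta^\ast (A_i^{int})^T P) z \right\},
\end{eqnarray}
which is legitimate because both sides equal the supremum of a single continuous function over the product of a compact sphere and a compact box. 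The effect is to place the maximization over $\mathcal{A}^I$ on the inside, with the unit vector $z$ held fixed.

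For fixed $z$ and fixed $P$, I would show that the inner objective $f(A) \doteq z^\ast (\beta P A + \beta^\ast A^T P) z$ is a real-valued linear function of the real entries $a_{ij}$ of $A$. Realness follows because $M \doteq \beta P A + \beta^\ast A^T P$ is Hermitian whenever $A$ is real and $P = P^\ast$: indeed $M^\ast = \beta^\ast A^T P + \beta P A = M$, so $z^\ast M z \in \mathbb{R}$. Linearity is immediate, since each entry of $A$ enters both $\beta P A$ and $\beta^\ast A^T P$ linearly and with no constant term.

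The key step is then the standard observation that a real linear (hence affine) function attains its maximum over a box at one of the box's vertices. Writing $f(A) = \sum_{i,j} c_{ij}(z,P)\, a_{ij}$, each coefficient $c_{ij}$ is independent of $A$, so $f$ is maximized over the independent intervals $a_{ij} \in \lfloor \underline{a_{ij}}, \overline{a_{ij}} \rfloor$ by selecting $a_{ij} = \overline{a_{ij}}$ when $c_{ij} > 0$ and $a_{ij} = \underline{a_{ij}}$ otherwise. The resulting maximizer lies in $\mathcal{A}^v$, so for every fixed $z$ we obtain $\max_{A \in \mathcal{A}^I} f(A) = \max_{A \in \mathcal{A}^v} f(A)$. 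Substituting this back and swapping the maximizations once more (now harmless, since $\mathcal{A}^v$ is finite) yields
\begin{eqnarray}
\overline{\lambda}(P) = \max_{A_i^{ver} \in \mathcal{A}^v} \left\{ \overline{\lambda_i}(\beta P A_i^{ver} + \beta^\ast (A_i^{ver})^T P) \right\},
\end{eqnarray}
i.e., the value of \eqref{eq3} is attained at a vertex matrix.

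I expect the main obstacle to be presentational rather than technical: one must be careful that the optimal vertex depends on $z$ (and on $P$), so the reduction to vertices holds \emph{pointwise} in $z$ and only then is the outer maximization over $\Vert z \Vert = 1$ reinstated; no single vertex need dominate for all $z$ simultaneously. It is worth emphasizing that the entire argument hinges on $P$ being held fixed throughout, which is exactly what aligns Lemma~\ref{lemma2} with quadratic stability (a common $P$) rather than robust stability (where $P_i^{int}$ may vary with $A_i^{int}$), and is the reason the vertex reduction is exact in this setting.
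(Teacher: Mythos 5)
Your proof is correct and follows essentially the same route the paper indicates: the identity \eqref{eq3} rewriting the maximum eigenvalue as a quadratic maximization is precisely the first step of your argument, after which the exchange of maxima and the linearity of $z^\ast(\beta P A + \beta^\ast A^T P)z$ in the entries of $A$ over the box $\mathcal{A}^I$ give the vertex reduction. The paper itself omits the detailed algebra (deferring to \emph{Lemma~2} of \cite{Ahn_Auto2008}), so your write-up, including the careful remark that the optimal vertex depends on $z$ and that $P$ must be held fixed, is a faithful and complete version of the intended proof.
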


%

The above \textit{Lemma~\ref{lemma2}} can be interpreted in two ways. 
First, if there exists a Hermitian matrix $P^{common}$ that renders the value of $\overline{\lambda}(P^{common})$ negative, we have the following inequality for all $A_i^{int} \in {\mathcal A}^I$: 
\begin{align*}
\max\{ \overline{\lambda_i} (\beta P^{common} A_i^{int} + \beta^\ast (A_i^{int})^T P^{common})  \} <0,
\end{align*}
which leads a robust stability condition of the system \eqref{eq1}. This can be summarized as follows:
\begin{theorem} \label{new_thm_rs}
The system (\ref{eq1}) is robust stable if there exists a common positive-definite matrix  $P^{common} = (P^{common})^\ast$  for all vertex matrices $A_i^{ver} \in {\mathcal A}^v$ such that 
\begin{eqnarray} \label{eq4_rs}
&& \overline{\lambda_i} (\beta P^{common} A_i^{ver} + \beta^\ast (A_i^{ver})^T P^{common})  <0,  \forall  A_i^{ver} \in   {\mathcal A}^v
\end{eqnarray}
\end{theorem}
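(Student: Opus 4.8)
The plan is to show that the finite vertex condition \eqref{eq4_rs}, once it holds for a single common certificate $P^{common}$, already forces the negative-definiteness of $\beta P^{common} A_i^{int} + \beta^\ast (A_i^{int})^T P^{common}$ for \emph{every} interval element matrix, and then to read off robust stability directly from \textit{Definition~\ref{def_rs}}. The entire argument hinges on \textit{Lemma~\ref{lemma2}}, which lets one replace the intractable maximization over the infinite family $\mathcal{A}^I$ by a check at the finitely many vertices in $\mathcal{A}^v$.

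Concretely, I would proceed as follows. First, assume a Hermitian $P^{common} = (P^{common})^\ast > 0$ exists with $\overline{\lambda_i}(\beta P^{common} A_i^{ver} + \beta^\ast (A_i^{ver})^T P^{common}) < 0$ at every vertex $A_i^{ver} \in \mathcal{A}^v$. Since $\mathcal{A}^v$ is finite, the maximum of these vertex values over $i \in \mathcal{V}^{index}$ is itself strictly negative. Next, apply \textit{Lemma~\ref{lemma2}} to the matrix $P^{common}$: the quantity $\overline{\lambda}(P^{common})$ defined in \eqref{eq3}, namely the maximum of $\overline{\lambda_i}$ taken over all $A_i^{int} \in \mathcal{A}^I$, is attained at one of the vertex matrices. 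Combining the last two observations gives $\overline{\lambda}(P^{common}) < 0$.

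I would then unfold the definition \eqref{eq3} of $\overline{\lambda}(P^{common})$ to transfer this single scalar inequality back onto the full interval family. Because $\overline{\lambda}(P^{common})$ is the maximum of $\overline{\lambda_i}(\beta P^{common} A_i^{int} + \beta^\ast (A_i^{int})^T P^{common})$ over all $A_i^{int} \in \mathcal{A}^I$, its negativity means that for every such interval matrix the maximum eigenvalue of the Hermitian matrix $\beta P^{common} A_i^{int} + \beta^\ast (A_i^{int})^T P^{common}$ is strictly negative, i.e. this matrix is negative definite. This is exactly the inequality required in \textit{Definition~\ref{def_rs}} once we make the choice $P_i^{int} = P^{common}$ for every index $i$, so the system \eqref{eq1} is robust stable.

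The only genuine content lies in \textit{Lemma~\ref{lemma2}}, which I am free to use; once it is in hand the remaining steps are a short logical chain. The point I expect to be most worth emphasizing, rather than a true obstacle, is the reduction from the infinite set $\mathcal{A}^I$ to the finite vertex set $\mathcal{A}^v$: this is precisely where \textit{Lemma~\ref{lemma2}} does the work, and it is what makes the vertex-only test in \eqref{eq4_rs} sufficient. It is also worth remarking that the common certificate $P^{common}$ in fact establishes the stronger \emph{quadratic} stability of \textit{Definition~\ref{def_qs}}, and robust stability follows as an immediate special case by taking all the per-matrix certificates $P_i^{int}$ equal to $P^{common}$.
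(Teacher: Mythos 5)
Your proposal is correct and follows essentially the same route as the paper: it uses \textit{Lemma~3.3} (the vertex-maximization property) to conclude $\overline{\lambda}(P^{common})<0$ from the finite vertex test, and then reads off robust stability from \textit{Definition~2.1} by taking every $P_i^{int}$ equal to the single certificate $P^{common}$. This matches the paper's own (terse) argument, which states the same reduction in the paragraph preceding the theorem and then cites the definition and the corresponding lemma.
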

\begin{proof}
The proof is direct from the \textit{Definition~\ref{def_rs}} and from the \textit{Lemma~\ref{lemma1-rs}}. 
\end{proof}

Second, when applying the quadratic stability concept given in the \textit{Definition~\ref{def_qs}}, the \textit{Lemma~\ref{lemma2}} can be considered as an exact condition, which is summarized in the following theorem:
\begin{theorem} \label{new_thm_qs}
The system (\ref{eq1}) is quadratically stable if and only if there exists a common positive-definite matrix  $P^{common} = (P^{common})^\ast$  for all vertex matrices $A_i^{ver} \in {\mathcal A}^v$ such that 
\begin{eqnarray} \label{eq4_qs}
&& \overline{\lambda_i} (\beta P^{common} A_i^{ver} + \beta^\ast (A_i^{ver})^T P^{common})  <0, \forall  A_i^{ver} \in   {\mathcal A}^v
\end{eqnarray}
\end{theorem}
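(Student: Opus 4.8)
The plan is to obtain both directions of the equivalence by composing \textit{Lemma~\ref{lemma1-qs}} with \textit{Lemma~\ref{lemma2}}. The key structural fact I would rely on is that quadratic stability pins down a \emph{single} common matrix $P$ across the entire interval family, so the inner maximization defining $\overline{\lambda}(P)$ in \eqref{eq3} is carried out for a fixed $P$; this is precisely the hypothesis under which \textit{Lemma~\ref{lemma2}} asserts that the maximum is attained at a vertex matrix.

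For the sufficiency (``if'') direction, I would begin with a common $P^{common}=(P^{common})^\ast>0$ satisfying \eqref{eq4_qs} at every vertex $A_i^{ver}\in{\mathcal A}^v$. Since \textit{Lemma~\ref{lemma2}} guarantees that $\overline{\lambda}(P^{common})$ is achieved at one of the vertex matrices, the finite collection of vertex inequalities forces $\overline{\lambda}(P^{common})<0$, that is
\begin{align*}
\max_{A_i^{int}\in{\mathcal A}^I}\overline{\lambda_i}\bigl(\beta P^{common}A_i^{int}+\beta^\ast(A_i^{int})^T P^{common}\bigr)<0.
\end{align*}
This is exactly the condition of \textit{Lemma~\ref{lemma1-qs}} with $P=P^{common}$, so quadratic stability follows.

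For the necessity (``only if'') direction, I would assume quadratic stability and invoke \textit{Lemma~\ref{lemma1-qs}} to produce a common $P=P^\ast>0$ with $\overline{\lambda}(P)<0$. Because ${\mathcal A}^v\subset{\mathcal A}^I$, each vertex matrix is a particular interval element, and the maximum over ${\mathcal A}^I$ dominates every individual vertex value; hence \eqref{eq4_qs} holds at each $A_i^{ver}$ with the choice $P^{common}=P$, completing this direction.

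The step I expect to carry the real weight is the sufficiency direction, and more precisely the legitimacy of invoking \textit{Lemma~\ref{lemma2}}: the passage from the infinite family ${\mathcal A}^I$ to the finite vertex set ${\mathcal A}^v$ is valid only because the \emph{same} $P$ appears in every term being maximized. This is exactly what breaks down in the robust setting of \textit{Theorem~\ref{new_thm_rs}}, where each $A_i^{int}$ may be paired with its own $P_i^{int}$, so that vertex-wise feasibility cannot be glued into a single certificate over the whole interval---which is why that companion result is only sufficient. Making this distinction explicit is, in my view, the conceptual core of the argument, even though the underlying algebra is brief.
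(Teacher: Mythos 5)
Your proposal is correct and follows essentially the same route as the paper: the ``only if'' direction is immediate from ${\mathcal A}^v\subset{\mathcal A}^I$, and the ``if'' direction combines \textit{Lemma~\ref{lemma2}} (the maximum of $\overline{\lambda}(P^{common})$ over ${\mathcal A}^I$ is attained at a vertex, for the fixed $P^{common}$) with \textit{Lemma~\ref{lemma1-qs}} to conclude quadratic stability. Your closing observation about why the same gluing fails in the robust setting of \textit{Theorem~\ref{new_thm_rs}} is accurate and consistent with the paper's discussion, though not part of its formal proof.
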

Though the proof of this theorem is eventually same to the proof of the \textit{Theorem 3} of \cite{Ahn_Auto2008}, we would like to clarify it again in the following proof. 

\begin{proof}
The \textit{only if ($\Longrightarrow$)} condition is clear from the context. The \textit{if  ($\Longleftarrow$)} condition can be investigated as follows. From the \textit{Lemma~\ref{lemma2}}, for a fixed $P^{common}>0$, we have 
\begin{align}
 \max_{A_i^{int} \in {\mathcal A}^I} & \{ \max_{ \Vert z \Vert =1} z^\ast (\beta P^{common} A_i^{int} + \beta^\ast (A_i^{int})^T P^{common}) z  \} \nonumber\\
 \leq &\max_{A_i^{ver} \in {\mathcal A}^v}  \{ \max_{ \Vert z \Vert =1} z^\ast (\beta P^{common} A_i^{ver} + \beta^\ast (A_i^{ver})^T P^{common}) z \}. 
\end{align}

Also from the \textit{Lemma~\ref{lemma2}}, it is inferred that if there exists a fixed $P^{common}$ satisfying $\max_{A_i^{ver} \in {\mathcal A}^v}  \{ \max_{ \Vert z \Vert =1} z^\ast (\beta P^{common} A_i^{ver}$ $+ \beta^\ast (A_i^{ver})^T P^{common}) z \}<0$, then the same $P^{common}$ exists ensuring $\max_{A_i^{int} \in {\mathcal A}^I}  \{ \max_{ \Vert z \Vert =1}$ $ z^\ast (\beta P^{common} A_i^{int} + \beta^\ast $ $(A_i^{int})^T  P^{common}) z  \} <0$. Thus, if there exists a common $P^{common} = (P^{common})^\ast>0$ satisfying (\ref{eq4_qs}), there will exist a common $P^{common} = (P^{common})^\ast>0$ such that 
(\ref{eq_qs}) holds. 
\end{proof}


It should be also noticed that the example-1 of \cite{Ahn_Auto2008} is quadratic stable because there exists $P^{common} = (P^{common})^\ast$ such as:
\begin{eqnarray} 
P^{common} = \left[ \begin{array}{ccc} 0.8575 & 0.1313 + j 0.1332 & 0.1613 + j 0.3652\\
         0.1313 - j 0.1332 & 0.7062 & -0.0051 + j 0.5039\\ 0.1613 - j0.3652 & -0.0051 - j 0.5039 & 1.0618 
\end{array}
\right]
\end{eqnarray}

\section{Conclusions and Remarks}
Motivated by the comments of \cite{Comment_2014}, we have checked correctness of the results of \cite{Ahn_Auto2008}. By using the concepts of robust stability and quadratic stability, we have clarified that the results of \cite{Ahn_Auto2008} would be considered a sufficient one in terms of robust stability, while it is an exact necessary and sufficient condition in terms of quadratic stability. 
%

As some possible future works, the following remark may be useful. 
\begin{remark}
It is noticeable that even in integer-order LTI interval systems, there has been no necessary and sufficient condition for robust stability. Thus, as far as the authors are concerned, it is still an open problem to find an exact and algebraically feasible solution for the robust stability of fractional-order interval systems. But, the discussions of this correspondence may provide some ideas on how to find an exact solution. Since the set of vertex matrices needs to be stable  and it also provides a sufficient condition (i.e., the existence of common quadratic Lyapunov function), it looks like that the vertex matrices still might play a key role in establishing the exact robust stability condition for some classes of fractional-order interval systems. Related to this argument, better results might be developed for fractional-order cases using the results in integer-order systems recently developed in \cite{Kolev_ijcta2010,Pastravanu_tac2011,Firouzbahrami_cta2013}. 
\end{remark}

\section{Acknowledgements}
The authors of \cite{Ahn_Auto2008} would like to once again thank the authors of \cite{Comment_2014} for giving us a chance to clarify the results of \cite{Ahn_Auto2008} in a clear way.

\end{document}